\newcommand{\R}{\mathbb{R}}
\newcommand{\Le}{\mathscr{L}}
\newcommand*\circled[1]{\tikz[baseline=(char.base)]{
            \node[shape=circle,draw,inner sep=2pt] (char) {#1};}}
\newtheorem{definition}{Definition}[section]
\newtheorem*{theorem*}{Theorem}
\newtheorem{theorem}{Theorem}[section]
\newtheorem{lemma}{Lemma}[section]
\newtheorem{remark}{Remark}
\newtheorem{proposition}{Proposition}[section]
\numberwithin{equation}{section}
\providecommand{\abstract}{}
\date{}
\title[A $L^p$ stability estimate for regular Lagrangian flows]{A note on a $L^p$ stability estimate for regular Lagrangian flows}
\author{Tommaso Cortopassi}
\thanks{Scuola Normale Superiore, 56126 Pisa, Italy. E-mail: tommaso.cortopassi@sns.it}
\begin{document}

\maketitle

\begin{abstract}
    \footnotesize
    In this note, we prove a $L^p$ version of the well known stability estimate for regular Lagrangian flows derived by Gianluca Crippa and Camillo De Lellis in \cite{crippa2008estimates}. As far as we know, the only estimate of this kind readily available in the literature is for the case $p=1$. With minor modifications to the proof in \cite{crippa2008estimates}, we show that an analogous estimate holds in $L^p$ norm with $p \in (1, + \infty)$. 
\end{abstract}

\section{Introduction}

In order to keep this note short, we won't cover in detail the problem of lack of uniqueness for the ODE

\begin{equation}\label{ODE}
\begin{cases}
    \dot{X} (t)= b(t, X(t,x)) \text{ for } t >0\\
    X(0)= x 
\end{cases}
\end{equation}

in a non classical (i.e. Lipshitz) setting. What we are interested in is the notion of regular Lagrangian flow, which has been proven to provide well-posedness for \eqref{ODE} in the hypotheses

\begin{equation}\label{hypotheses}
b\in L^1 ([0,T]; BV_{loc} (\R^n ; \R^n)) \cap L^\infty ([0,T] \times \R^n ; \R^n) \text{ and } div(b) ^{-} \in L^1 ([0,T] ; L^\infty (\R^n)),
\end{equation}

with $div(b)^-$ denoting the negative part of the divergence. Properties such as existence and uniqueness of the regular Lagrangian flow under the hypotheses \eqref{hypotheses} will be given for granted, the interested reader can look at the seminal paper by Di Perna and Lions \cite{diperna1989ordinary} for the case of $b \in L^1 ([0,T] ; W^{1,p} _{loc} (\R^n; \R^n))$ with $p \geq 1$ and  \cite{ambrosio2004transport} for the generalisation to $BV_{loc}$ vector fields due to Ambrosio. We will use the definition from \cite{ambrosio2004transport} and \cite{ambrosio2014continuity} since it's the one used in \cite{crippa2008estimates}. Notice that this definition is slightly different from the one introduced in \cite{diperna1989ordinary}.

\begin{definition}[\textbf{Regular Lagrangian Flow}] \label{rlf definition}

    Let $b \in L^1 _{loc} ([0,T] \times \R^n; \R^n)$. A map $X: [0,T] \times \R^n \to \R^n$ is a regular Lagrangian flow for $b$ if:

    \begin{enumerate}
        \item for a.e. $x \in \R^n$ the map $x \mapsto X(t,x)$ is an absolutely continuous integral solution of $\dot{\gamma} (t) = b(t, \gamma(t))$ for $ t \in [0,T]$, with $\gamma (0)=x$;

        \item there exists a positive constant $L$, not dependent on $t$, such that 

        $$ \Le^n (X(t, \cdot ) ^{-1} (A)) \leq L \Le^n (A) \text{ for every Borel set } A \subset \R^n, \; \forall t \in [0,T] .$$
  
    Such constant will be called the compressibility constant of $X$.
      \end{enumerate}
\end{definition}

\newpage
\begin{theorem}{\textbf{Existence and uniqueness of the regular Lagrangian flow \cite{ambrosio2004transport}}}

    Let $b$ be a vector field satisfying \eqref{hypotheses}, then there exists a unique regular Lagrangian flow for $b$, in the sense that if $X_i$ are regular Lagrangian flows for $b$ starting from $\Le ^d$-measurable sets $A_i$, with $i=1,2$, then

    $$ B \coloneqq \{x \in A_1 \cap A_2 | X_1 (t,x) = X_2 (t,x) \; \forall t \in [0,T] \}  \text{ has full } \Le ^d \text{ measure in } A_1 \cap A_2 ,$$

    in the sense that $\Le ^d ((A_1 \cap A_2 ) \setminus B)= 0$.
\end{theorem}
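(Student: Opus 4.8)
The plan is to reduce the Lagrangian uniqueness statement to a uniqueness statement for the associated continuity equation, following the DiPerna--Lions--Ambrosio theory. First I would pass from the ODE to the PDE via Ambrosio's superposition principle: any nonnegative measure-valued solution $\mu_t$ of the continuity equation $\partial_t \mu_t + \mathrm{div}(b\,\mu_t) = 0$ with $\mu_0 = \Le^n \measurerestr A_i$ can be represented as the image of a measure on curve space concentrated on absolutely continuous integral solutions of the ODE. Using the bound $\Le^n(X(t,\cdot)^{-1}(A)) \le L\,\Le^n(A)$ from Definition~\ref{rlf definition}, the pushforwards $(X_i(t,\cdot))_\#(\Le^n \measurerestr A_i)$ are absolutely continuous with uniformly bounded densities, so each regular Lagrangian flow induces a bounded nonnegative solution of the continuity equation. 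The superposition principle then reduces the coincidence of $X_1$ and $X_2$ on $A_1 \cap A_2$ to uniqueness of bounded nonnegative solutions of the continuity equation with the prescribed initial datum.

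The core of the argument is the renormalization property: under \eqref{hypotheses}, every bounded distributional solution $u$ of $\partial_t u + \mathrm{div}(b u) = 0$ is renormalized, i.e. $\partial_t \beta(u) + \mathrm{div}(b\,\beta(u)) - (\mathrm{div}\, b)(\beta(u) - u\,\beta'(u)) = 0$ for every $\beta \in C^1$. I would prove this by mollifying, $u_\varepsilon := u * \rho_\varepsilon$, and studying the commutator $r_\varepsilon := \mathrm{div}(b\,u_\varepsilon) - (\mathrm{div}(bu)) * \rho_\varepsilon$ between convolution and the transport operator, showing $r_\varepsilon \to 0$ in $L^1_{loc}$. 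For Sobolev fields this follows from a direct difference-quotient computation; the genuine difficulty here, and the step I expect to be the main obstacle, is the $BV$ case, in which $Db$ is only a measure whose singular part may concentrate on lower-dimensional sets. This is handled using Alberti's rank-one structure theorem, which forces the polar decomposition of the singular part of $Db$ to have rank one, together with an anisotropic choice of mollifier adapted to the direction of concentration, so that the commutator still vanishes in the limit.

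From renormalization I would then derive the comparison principle for bounded solutions of the continuity equation. Choosing suitable $\beta$ and integrating, the renormalized equation yields an $L^\infty$--$L^1$ a priori estimate controlled by $\mathrm{div}(b)^- \in L^1([0,T]; L^\infty(\R^n))$; in particular a bounded solution with zero initial datum remains zero, which by linearity gives uniqueness of bounded nonnegative solutions. Combined with the superposition step, this forces $X_1(t,\cdot) = X_2(t,\cdot)$ for $\Le^n$-a.e. $x \in A_1 \cap A_2$ and every $t \in [0,T]$, i.e. $\Le^n((A_1 \cap A_2) \setminus B) = 0$.

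Existence, finally, I would obtain by regularization: mollify $b$ to get smooth $b_\varepsilon$ with classical flows $X_\varepsilon$, use the uniform compressibility bounds stemming from the control on $\mathrm{div}(b)^-$ to extract a limit point, and identify the limit as a regular Lagrangian flow through the stability of the continuity equation under the convergence $b_\varepsilon \to b$. The uniqueness established above guarantees that the limit is independent of the approximating sequence, completing the proof.
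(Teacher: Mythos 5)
The paper itself contains no proof of this theorem: it is quoted from \cite{ambrosio2004transport}, and the introduction states explicitly that existence and uniqueness of the regular Lagrangian flow under \eqref{hypotheses} ``will be given for granted.'' So your proposal can only be measured against the proof in the cited reference, and against that benchmark your outline is faithful to its architecture: pass from the ODE to the continuity equation, use the compressibility bound of Definition \ref{rlf definition} to see that each flow pushes $\Le^n$ forward to a bounded nonnegative solution, prove the renormalization property for $BV$ fields via commutator estimates (where Alberti's rank-one theorem and anisotropic mollification are indeed the key to handling the singular part of $Db$), deduce uniqueness for bounded solutions of the continuity equation from the renormalized equation and the bound $\mathrm{div}(b)^- \in L^1([0,T];L^\infty(\R^n))$, and obtain existence by smooth approximation. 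This is exactly Ambrosio's strategy.

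One step in your sketch is looser than the rest and deserves to be flagged: the claim that ``the superposition principle then reduces the coincidence of $X_1$ and $X_2$ on $A_1\cap A_2$ to uniqueness of bounded nonnegative solutions of the continuity equation'' is not automatic, because PDE uniqueness concerns the one-dimensional marginals $\mu_t$, while the statement to be proved is pathwise. The standard way to close this gap is to consider the measure on path space $\eta = \int_{A_1\cap A_2} \tfrac{1}{2}\bigl(\delta_{X_1(\cdot,x)} + \delta_{X_2(\cdot,x)}\bigr)\, d\Le^n(x)$, observe that it is a superposition solution whose marginals are bounded solutions of the continuity equation, and use well-posedness of the PDE together with a disintegration argument to conclude that the fiber of $\eta$ over $\Le^n$-a.e.\ initial point $x$ is a Dirac mass, which is precisely the assertion $X_1(\cdot,x) = X_2(\cdot,x)$. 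With that disintegration step made explicit, your outline is a correct account of the proof in \cite{ambrosio2004transport}; without it, the reduction from Lagrangian to Eulerian uniqueness is asserted rather than proved.
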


The result we want to generalise is the following, which can be found in \cite{crippa2008estimates}:

\begin{proposition}\label{Crippa-De Lellis estimate}{\textbf{\cite{crippa2008estimates}}}

    Let $b$ and $\tilde{b}$ be bounded vector fields belonging to $L^1 ([0,T]; W^{1,p} (\R^n))$ for some $p>1$. Let $X$ and $\tilde{X}$ be regular Lagrangian flows associated to $b$ and $\tilde{b}$ respectively and denote by $L$ and $\tilde{L}$ the compressibility constants of the flow. Then, for every time $\tau \in [0,T]$, we have

    $$ ||X( \tau, \cdot )- \tilde{X} (\tau, \cdot)||_{L^1(B_r (0))} \leq C \left | \log (|| b - \tilde{b} ||_{L^1 ([0, \tau] \times B_R (0))} ) \right|^{-1} ,$$

where $R= r + T|| \tilde{b}||_\infty$ and the constant $C$ only depends on $\tau, r, ||b||_\infty , ||\tilde{b}||_\infty , L, \tilde{L} $ and $||D_x b||_{L^1 (L^p)}$.
\end{proposition}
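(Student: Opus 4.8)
The plan is to follow the functional approach of Crippa--De Lellis: introduce, for a parameter $\delta > 0$, the quantity
\[
g_\delta(\tau) := \int_{B_r(0)} \log\left(1 + \frac{|X(\tau,x) - \tilde X(\tau,x)|}{\delta}\right)\,dx,
\]
which is a regularized surrogate for $\|X(\tau,\cdot) - \tilde X(\tau,\cdot)\|_{L^1(B_r)}$. Since for a.e.\ $x$ both $t \mapsto X(t,x)$ and $t\mapsto \tilde X(t,x)$ are absolutely continuous and solve the ODE (Definition \ref{rlf definition}), I would differentiate under the integral sign and use $\dot X = b(t,X)$, $\dot{\tilde X} = \tilde b(t,\tilde X)$ to obtain
\[
g_\delta'(\tau) \le \int_{B_r(0)} \frac{|b(\tau,X) - \tilde b(\tau,\tilde X)|}{\delta + |X - \tilde X|}\,dx.
\]
The triangle inequality then splits the numerator as $|b(\tau,X) - b(\tau,\tilde X)| + |(b-\tilde b)(\tau,\tilde X)|$, separating a \emph{regularity} term from a \emph{distance between the fields} term.

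For the first term I would invoke the pointwise Lusin--Lipschitz estimate for Sobolev functions: for $u \in W^{1,p}(\R^n)$ there is a negligible set off which $|u(x) - u(y)| \le C_n |x-y|\bigl(M|Du|(x) + M|Du|(y)\bigr)$, with $M$ the Hardy--Littlewood maximal operator. Dividing by $\delta + |X - \tilde X|$ and using $|X - \tilde X|/(\delta + |X-\tilde X|) \le 1$, the first term is controlled by $C_n\bigl(M|Db|(\tau,X) + M|Db|(\tau,\tilde X)\bigr)$. Integrating over $B_r$, Hölder's inequality with exponents $p,p'$ together with the change of variables governed by the compressibility constants $L,\tilde L$ and the $L^p\to L^p$ boundedness of $M$ (valid precisely because $p > 1$) yields
\[
\int_0^\tau\!\!\int_{B_r} C_n\bigl(M|Db|(t,X) + M|Db|(t,\tilde X)\bigr)\,dx\,dt \le C_1,
\]
with $C_1$ depending only on the stated quantities and $\|Db\|_{L^1(L^p)}$. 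For the second term I would bound $\delta + |X - \tilde X| \ge \delta$ and change variables through $\tilde X$, using $\tilde X(\tau,B_r) \subset B_R$ with $R = r + T\|\tilde b\|_\infty$, to get
\[
\int_0^\tau\!\!\int_{B_r}\frac{|(b-\tilde b)(t,\tilde X)|}{\delta}\,dx\,dt \le \frac{\tilde L}{\delta}\,\|b-\tilde b\|_{L^1([0,\tau]\times B_R)}.
\]
Integrating in time gives $g_\delta(\tau) \le C_1 + \tilde L\,\delta^{-1}\|b-\tilde b\|_{L^1([0,\tau]\times B_R)}$.

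Finally I would extract the $L^1$ estimate. Choosing $\delta = \|b-\tilde b\|_{L^1([0,\tau]\times B_R)}$ makes $g_\delta(\tau) \le C_3$ uniformly. A Chebyshev argument gives, for any $\eta > 0$,
\[
\Le^n\bigl(\{x\in B_r : |X - \tilde X| > \eta\}\bigr) \le \frac{g_\delta(\tau)}{\log(1 + \eta/\delta)} \le \frac{C_3}{\log(1+\eta/\delta)},
\]
and, using that both flows remain in $B_R$ so that $|X - \tilde X| \le 2R$,
\[
\|X - \tilde X\|_{L^1(B_r)} \le \eta\,\Le^n(B_r) + 2R\,\frac{C_3}{\log(1+\eta/\delta)}.
\]
Taking $\eta = \delta^{1/2} = \|b-\tilde b\|^{1/2}$ makes the first term negligible compared with $|\log\|b-\tilde b\||^{-1}$, while $\log(1+\eta/\delta) \sim \tfrac12|\log\|b-\tilde b\||$, producing the claimed bound (the estimate being trivial, after enlarging $C$, when $\|b-\tilde b\|_{L^1}$ is not small).

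I expect the main obstacle to be the first term: the pointwise maximal-function estimate for Sobolev functions together with the change of variables against the flow are what force the restriction $p > 1$, since the Hardy--Littlewood operator is unbounded on $L^1$; this is exactly the structural point the present note must revisit to pass to an $L^p$-norm conclusion. Justifying the differentiation of $g_\delta$ under the integral via the a.e.\ absolute continuity in Definition \ref{rlf definition}, and the measurability of the maximal functions composed with the flow, are the remaining technical points.
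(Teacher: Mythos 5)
Your proposal is correct and follows essentially the same scheme as the paper's own argument (which the paper carries out for the $L^p$ generalization): the logarithmic functional $g_\delta$, differentiation along the flows, the split into a regularity term handled via the Lusin--Lipschitz pointwise inequality plus H\"older, change of variables with the compressibility constants, and the $L^p$ boundedness of the maximal operator (the point where $p>1$ is needed), then a term bounded by $\delta^{-1}\|b-\tilde b\|_{L^1}$, and finally a Chebyshev argument with an optimized threshold $\eta$. The only cosmetic deviations are your use of the global Hardy--Littlewood maximal operator instead of the paper's local one $M_\lambda$ (harmless here since $Db(t,\cdot)\in L^p(\R^n)$ globally) and the superlevel-set form of Chebyshev instead of the paper's ``good set $K$'' formulation, which are equivalent parametrizations of the same final step.
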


\begin{remark}
    Actually, in Proposition \ref{Crippa-De Lellis estimate}, the constant $C$ depends on $p$ too. Indeed in the proof of \cite{crippa2008estimates} they use the $L^p$ estimate in Lemma \ref{maximal function estimate}, where the constant $c_{n,p}$ depends also on $p$. 
\end{remark}
\section{Main Result}

The result we will prove is the following:

\begin{proposition}
    Keeping the notations and the assumptions of Proposition \ref{Crippa-De Lellis estimate} it holds that

    \begin{equation}\label{main estimate}
    \sup_{0 \leq t \leq \tau} ||X( t, \cdot )- \tilde{X} (t, \cdot)||_{L^p(B_r (0))} \leq C \left | \log (|| b - \tilde{b} ||_{L^1 ([0, \tau]; L^p ( B_R (0)))} )  \right|^{-1} ,
    \end{equation}

assuming $|| b - \tilde{b} ||_{L^1 ([0, \tau]; L^p ( B_R (0)))}$ to be small enough.
\end{proposition}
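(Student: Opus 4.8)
The plan is to mirror the Crippa--De Lellis argument from \cite{crippa2008estimates} for the $p=1$ case, replacing the $L^1$ functional with its $L^p$ analogue and tracking where the exponent $p$ enters. The central object is the integral functional
\begin{equation*}
g_\delta(t) \coloneqq \int_{B_r(0)} \log\!\left( 1 + \frac{|X(t,x) - \tilde{X}(t,x)|}{\delta} \right)^p \, dx,
\end{equation*}
for a parameter $\delta > 0$ to be optimized at the end. Note $g_\delta(0) = 0$. The strategy is to differentiate $g_\delta$ in time, bound $g_\delta'(t)$, integrate in time to control $g_\delta(\tau)$, and then convert a bound on $g_\delta$ into a bound on $\|X(t,\cdot) - \tilde{X}(t,\cdot)\|_{L^p(B_r(0))}$ via a Chebyshev/truncation argument. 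The factor-$p$ power inside the logarithm is the natural choice so that the final extraction yields the $L^p$ norm rather than the $L^1$ norm.

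First I would differentiate under the integral sign and use the ODEs satisfied by $X$ and $\tilde X$ to write $g_\delta'(t)$ in terms of $|b(t,X) - \tilde b(t,\tilde X)|$. Splitting this difference as $|b(t,X)-b(t,\tilde X)| + |b(t,\tilde X)-\tilde b(t,\tilde X)|$, the second term is controlled directly by $\|b-\tilde b\|$ after the change of variables governed by the compressibility constant $\tilde L$; the first term is the delicate one. Here one invokes the standard Lusin-type estimate for $BV$/Sobolev functions in terms of the maximal function of the derivative, namely $|b(t,x)-b(t,y)| \le C_n |x-y|\big(M|D_x b|(x) + M|D_x b|(y)\big)$, so that the logarithmic integrand is bounded by a quantity involving $M|D_x b|$ evaluated along the flow. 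This is precisely where the maximal function estimate (Lemma \ref{maximal function estimate}) enters and where the $p$-dependent constant $c_{n,p}$ appears.

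The main technical step is to bound $g_\delta'(t)$ by a constant independent of $\delta$. I would estimate
\begin{equation*}
g_\delta'(t) \le C \int_{B_r(0)} \left( M|D_x b|(t,X) + M|D_x b|(t,\tilde X) \right) \log\!\left(1 + \tfrac{|X-\tilde X|}{\delta}\right)^{p-1} dx + \frac{C}{\delta}\,\|b-\tilde b\|_{L^p(B_R(0))}\,(\text{lower order}).
\end{equation*}
The logarithmic factor is now raised to the power $p-1$ rather than $p$; to close the estimate I would apply H\"older's inequality with exponents $p$ and $p/(p-1)$, pairing the maximal-function term (in $L^p$, controlled by the $L^p$ boundedness of the maximal operator and the compressibility constants $L, \tilde L$) against the logarithmic term (whose $p/(p-1)$ power brings back the full $p$-th power $\log^p$, i.e.\ a multiple of $g_\delta(t)^{(p-1)/p}$). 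This yields a differential inequality of the form $g_\delta'(t) \le C_1 + C_2\, g_\delta(t)^{(p-1)/p}$ or, after absorbing, a bound giving $g_\delta(\tau) \le C(1 + |\log\delta|)$ type growth — crucially with $C$ independent of $\delta$. The role of H\"older's inequality in converting the $p-1$ power back to the $p$-th power is the key new ingredient relative to the $p=1$ proof and is where I expect the bookkeeping of constants (and the $p$-dependence) to require the most care.

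Finally, I would run the standard truncation argument to extract the $L^p$ norm. On the set where $|X-\tilde X| \le \eta$ for a threshold $\eta$, the contribution to $\|X-\tilde X\|_{L^p}^p$ is at most $\eta^p |B_r|$; on the complementary set, $\log(1+|X-\tilde X|/\delta) \ge \log(1+\eta/\delta)$, so Chebyshev gives that the measure of this set, and hence its contribution, is controlled by $g_\delta(\tau)/\log(1+\eta/\delta)^p$. Choosing $\delta$ appropriately in terms of $\|b-\tilde b\|_{L^1(L^p)}$ and optimizing $\eta$ produces the claimed logarithmic bound; taking the supremum over $t \in [0,\tau]$ is legitimate since the bound on $g_\delta(t)$ holds uniformly on $[0,\tau]$. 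The smallness hypothesis on $\|b-\tilde b\|_{L^1([0,\tau];L^p(B_R(0)))}$ guarantees that the logarithm is negative and bounded away from zero so the right-hand side is well defined and finite.
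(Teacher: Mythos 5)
Your proposal follows the same route as the paper's proof: the same functional (the paper works with $g(t)=\bigl[\int_{B_r(0)}\log(1+|X-\tilde X|/\delta)^p\,dx\bigr]^{1/p}$, i.e.\ the $p$-th root of your $g_\delta$), the same splitting of $|b(t,X)-\tilde b(t,\tilde X)|$, the same Lusin-type estimate of Lemma \ref{pointwise bv inequality}, H\"older with exponents $p$ and $p/(p-1)$, the change of variables via the compressibility constants combined with Lemma \ref{maximal function estimate}, the choice $\delta=\|b-\tilde b\|_{L^1([0,\tau];L^p(B_R(0)))}$, and a final Chebyshev/truncation step. Up to working with $g_\delta$ instead of $g_\delta^{1/p}$, these are the paper's steps in the same order.

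There is, however, one genuine error at the integration step, and it matters. Both terms of your bound on $g_\delta'(t)$ carry the factor $g_\delta(t)^{(p-1)/p}$ produced by H\"older applied to $\log^{p-1}$, so the correct differential inequality is $g_\delta'(t)\le h(t)\,g_\delta(t)^{(p-1)/p}$ with $h(t)= C\|D_x b(t,\cdot)\|_{L^p}+\frac{C}{\delta}\|b(t,\cdot)-\tilde b(t,\cdot)\|_{L^p}$. Writing $\frac{d}{dt}\,g_\delta^{1/p}\le \frac{1}{p}h(t)$ and using the choice of $\delta$, integration gives $g_\delta(t)\le C^p$ for all $t\in[0,\tau]$, with $C$ genuinely independent of $\delta$; this is precisely why the paper defines its functional with the outer $1/p$-th power, which makes the inequality linear. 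Your claimed outcome, a bound of ``$g_\delta(\tau)\le C(1+|\log\delta|)$ type,'' is internally inconsistent (it is not $\delta$-independent) and, more importantly, too weak to prove the proposition: feeding it into your own truncation argument gives
\begin{equation*}
\int_{B_r(0)}|X-\tilde X|^p\,dx\;\le\; \eta^p\,|B_r(0)|\;+\;(\|X\|_\infty+\|\tilde X\|_\infty)^p\,\frac{C(1+|\log\delta|)}{\log(1+\eta/\delta)^p},
\end{equation*}
and optimizing over $\eta$ (the best choice being $\eta\sim|\log\delta|^{-(p-1)/p}$) yields only $\|X-\tilde X\|_{L^p}\lesssim|\log\delta|^{-(p-1)/p}$, strictly weaker than the claimed $|\log\delta|^{-1}$ in \eqref{main estimate}. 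The fix is immediate --- record the $\delta$-free bound $g_\delta(\tau)\le C^p$, which your differential inequality does deliver --- and with that correction the rest of your argument goes through and coincides with the paper's proof.
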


\begin{proof}

Let $\delta \coloneqq ||b - \tilde{b} ||_{L^1 ([0, \tau]; L^p (B_{R} (0)))} $ with $R= r + T||\tilde{b} ||_\infty$ and let 

$$ g (t) \coloneqq \left[\int_{B_r (0)} \log \left(1+ \frac{|X (t,x) - \tilde{X} (t,x)|}{\delta} \right)^p dx \right]^{1/p}.$$

For simplicity of notation, we'll omit explicitly writing the dependence on $t$ and $x$ when this won't be necessary. So we'll often write $X $ and $ \tilde{X} $ in lieu of $X(t,x)$ and $ \tilde{X} (t,x)$. Differentiating in time:

\small
\begin{align}\label{g' inequality}
    g' (t) &= \frac{1}{p} g(t) ^{1-p} \int_{B_r (0)} p \log \left( 1 + \frac{|X - \tilde{X}|}{\delta} \right) ^{p-1} \frac{\delta }{\delta + |X - \tilde{X}|}  \frac{1}{\delta} \frac{X - \tilde{X}}{|X - \tilde{X}|} (X - \tilde{X}' )dx \leq \\
    &\leq \frac{g(t)^{1-p}}{\delta}  \int_{B_r (0)} \log \left(  1 + \frac{|X - \tilde{X}|}{\delta} \right) ^{p-1} \frac{1}{1 + |X - \tilde{X}|/\delta} |b (t, X) - \tilde{b} (t, \tilde{X})| dx \leq \nonumber \\
    & \leq \underbrace{\frac{g(t)^{1-p}}{\delta}  \int_{B_r (0)} \log \left(  1 + \frac{|X - \tilde{X}|}{\delta} \right) ^{p-1} \frac{1}{1 + |X - \tilde{X}|/\delta} |b (t, X) - b (t, \tilde{X})| dx}_{\circled{1}} + \nonumber \\
    &+  \underbrace{\frac{g(t)^{1-p}}{\delta}  \int_{B_r (0)} \log \left(  1 + \frac{|X - \tilde{X}|}{\delta} \right) ^{p-1} \frac{1}{1 + |X - \tilde{X}|/\delta} |b (t, \tilde{X}) - \tilde{b} (t, \tilde{X})| dx}_{\circled{2}}. \nonumber 
\end{align}

Regarding $\circled{1}$ we have, thanks to Lemma \ref{pointwise bv inequality}:

\begin{equation}\label{velocity pointwise inequality}
 |b (t, X (t,x)) - b (t, \tilde{X} (t,x))| \leq c_n |X (t,x) - \tilde{X} (t,x)| [M_{\tilde{R}} Db (t, X(t,x)) + M_{\tilde{R}} Db(t, \tilde{X}(t,x))],
 \end{equation}

 for almost every $t \in [0, \tau]$ and (at fixed $t$) for almost every $x \in \R^n$, with $\tilde{R} = T(||b||_\infty + ||\tilde{b}||_\infty)$.

\begin{remark}

Notice that the compressibility constraint in the definition of regular Lagrangian flow is crucial for \eqref{velocity pointwise inequality} to hold almost everywhere for every $t$. Indeed, for almost every $t$ we have that $b_t (x) \coloneqq b (t,x) \in W^{1, p} (\R^n)$. Then \eqref{velocity pointwise inequality} holds only if $X(t,x) , \tilde{X} (t,x) \in \R^n \setminus N_t$, with $N_t$ a null set. Then inequality \eqref{velocity pointwise inequality} holds for almost every $x \in \R^n$ since

$$ \Le ^n (X(t, \cdot ) ^{-1} (N_t)) \leq L \Le ^n (N_t) =0 \text{ and } \Le ^n (\tilde{X} (t, \cdot ) ^{-1} (N_t)) \leq \tilde{L} \Le ^n (N_t) =0.$$
\end{remark}

Using \eqref{velocity pointwise inequality}:

\footnotesize
\begin{align*}
    \circled{1} &\leq c_n g(t) ^{1-p} \int_{B_r (0)} \log \left(  1 + \frac{|X - \tilde{X}|}{\delta} \right) ^{p-1} \frac{|X - \tilde{X}|/\delta}{1 + |X - \tilde{X}|/\delta}[M_{\tilde{R}} Db (t, X(t,x)) + M_{\tilde{R}} Db (t, \tilde{X}(t,x))] dx\leq\\
    &\leq  c_n g(t) ^{1-p} \int_{B_r (0)} \log \left(  1 + \frac{|X - \tilde{X}|}{\delta} \right) ^{p-1}  [M_{\tilde{R}} Db (t, X(t,x)) + M_{\tilde{R}} Db (t, \tilde{X}(t,x))] dx =\\
    &=\underbrace{c_n g(t) ^{1-p} \int_{B_r (0)} \log \left(  1 + \frac{|X - \tilde{X}|}{\delta} \right) ^{p-1} M_{\tilde{R}} Db (t, X(t,x)) dx}_{ \circled{3}} + \\
    &+\underbrace{c_n g(t) ^{1-p} \int_{B_r (0)} \log \left(  1 + \frac{|X - \tilde{X}|}{\delta} \right) ^{p-1} M_{\tilde{R}} Db (t, \tilde{X}(t,x)) dx}_{\circled{4}}.
\end{align*}

\normalsize
For $\circled{3}$, using H\"older inequality with exponents $p$ and $p/(p-1)$ yields

$$ \circled{3} \leq c_n g(t) ^{1-p} g(t) ^{p-1} \left( \int_{B_r (0)} [M_{\tilde{R}} Db (t, X(t,x))]^p dx \right) ^{1/p},$$

and changing variables putting $y=X(t,x)$ (recall condition \textit{(2)} in Definition \ref{rlf definition}) and using Lemma \ref{maximal function estimate} gives

$$ \circled{3} \leq c_n L^{1/p} \left(\int_{B_{r+ T ||b||_\infty} (0)} [M_{\tilde{R}} Db (t,y)]^p dy \right)^{1/p} \leq c_n c_{p,n} L^{1/p} ||Db (t, \cdot)||_{L^p (B_{R'} (0))}, $$

with $R'= r + 3 T \max\{||b||_\infty, ||\tilde{b}||_\infty \} $. With this choice of $R'$, it's easy to see that with the same argument we get

$$ \circled{4} \leq c_n c_{p,n} \tilde{L}^{1/p} ||Db (t, \cdot)||_{L^p (B_{R'} (0))} .$$

As for $\circled{2}$, we clearly have:

\begin{align*}
    \circled{2} \leq \frac{g(t) ^{1-p}}{\delta} \int_{B_r (0)} \log \left(  1 + \frac{|X - \tilde{X}|}{\delta} \right) ^{p-1}|b (t, \tilde{X} (t,x)) - \tilde{b} (t, \tilde{X} (t,x))| dx.
\end{align*}

First using H\"older and then changing variables as before:

\begin{align*}
    \circled{2} \leq \frac{ \tilde{L} ^{1/p} }{\delta } ||b(t, \cdot) - \tilde{b} (t, \cdot) ||_{L^p (B_{R} (0))}    ,
\end{align*}

where we recall that $R= r + T ||\tilde{b}||_\infty$. Inequality \eqref{g' inequality} now reads:

$$ g'(t) \leq c_n c_{p,n} \left( L^{1/p} + \tilde{L} ^{1/p} \right)  ||Db (t, \cdot)||_{L^p (B_{R'} (0))}  + \frac{\tilde{L} ^{1/p}}{\delta} ||b(t, \cdot) - \tilde{b} (t, \cdot) ||_{L^p (B_{R} (0))} .$$

Thanks to the choice of $\delta$, integrating in time from $0$ to $\tau$ we get:

$$ g(t) \leq C \text{ for all } t \in [0, \tau].$$

with $C$ being a constant depending on $ \tau, r, L, \tilde{L}, ||D_x b ||_{L^1 (L^p (B_{R'} (0))} , ||b||_\infty , ||\tilde{b}||_\infty $ and $p$. This means that, for every $t \in [0,\tau]$:

$$ \int_{B_r (0)} \log \left(1+ \frac{|X (t,x) - \tilde{X} (t,x)|}{\delta} \right)^p dx  \leq C^p .$$
 
By Chebychev inequality, for every $\eta>0$ there exists a set $K$ such that $|B_r \setminus K| \leq \eta$ and 
 
 $$ \log \left( \frac{|X (t,x) - \tilde{X} (t,x)|}{\delta} +1\right) ^p \leq \frac{C^p}{\eta} \text{ on }K. $$ 
 
 Then, on $K$:

 $$ |X (t,x) - \tilde{X} (t,x)|^p \leq \delta^p \exp \left(\frac{Cp}{\eta^{1/p}} \right).$$

 Integrating over $B_r$:

\begin{equation} \label{mid Lp estimate}
\int_{B_r (0)} |X (t,x)- \tilde{X} (t,x) |^p dx \leq \eta (||X ||_\infty + ||\tilde{X} ||_\infty ) ^p + \omega_n r^n \delta^p \exp \left(\frac{Cp}{\eta^{1/p}} \right),
\end{equation}

with $\omega _n \coloneqq |B_1 (0)|$. Choosing $\eta= 2^p C^p |\log \delta |^{-p}$ we get

$$\exp \left(\frac{Cp}{\eta^{1/p}} \right)= \exp\left(\frac{p}{2|\log(\delta)|^{-1}}\right)= \exp \left( - \frac{p}{2} \log(\delta) \right)= \exp( \log(\delta^{-p/2})) = \delta ^{-p/2} ,$$

so \eqref{mid Lp estimate} reads as:

\begin{equation}\label{last estimate}   
\int_{B_r (0)} |X(t,x) - \tilde{X} (t,x) |^p 
 dx \leq 2^p C^p (||X ||_\infty + ||\tilde{X} ||_\infty )^p  |\log \delta |^{-p}  + \omega_n r^n \delta ^{p/2} .
 \end{equation} 

Taking the p-th rooth, and assuming that $\delta= || b - \tilde{b} ||_{L^1 ([0, \tau]; L^p ( B_R (0)))}$ is small so that $|\log(\delta)|^{-1} >>\sqrt{\delta}$, we conclude.
\end{proof}

\begin{remark}[Unbounded case]

The theory presented deals only with the bounded case, a generalisation of the theory to unbounded vector fields satisfying additional hypotheses can be found in \cite{ambrosio2014continuity}. In \cite{crippa2008estimates} they also prove an unbounded version of \eqref{main estimate}, whose proof is a modification of the one for bounded vector fields. We believe that, using similar modifications, a $L^p$ estimate should follow as well.
\end{remark}

\section{Appendix}

\begin{definition}[\textbf{Local maximal function}]

    Let $\mu$ be a vector valued locally finite measure. For every $\lambda >0$, we define the local maximal function of $\mu$ as 

    $$ M_\lambda \mu (x) \coloneqq \sup_{0 < r < \lambda} \frac{|\mu| (B_r (x))}{|B_r (x)|} = \sup_{0< r < \lambda} \fint_{B_r (x)} d|\mu| (y) , \qquad \text{for } x \in \R^n.$$

    When $\mu = f \Le ^n$ with $f \in L^1 _{loc} (\R^n ; \R^n)$, we will write $M_\lambda f$ to denote $M_\lambda \mu$.
\end{definition}

The proof of both of the following lemmas can be found in \cite{stein1970singular}.

\begin{lemma}\label{pointwise bv inequality}

    If $ u \in BV (\R^n)$, there exists a negligible set $N \subset \R^n$ such that

    $$ |u(x) - u(y)| \leq c_n |x-y| (M_\lambda Du(x) + M_\lambda Du (y)) \text{ for } x, y \in \R^n \setminus N \text{ with } |x-y| \leq \lambda.$$
\end{lemma}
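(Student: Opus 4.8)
The plan is to establish the classical Riesz-potential and maximal-function bound for $BV$ functions in three steps, working at Lebesgue points so that the exceptional set $N$ is $\Le^n$-negligible. Throughout, recall that by the definition of the maximal function in the appendix, $M_\lambda Du(x) = \sup_{0 < \rho < \lambda} |Du|(B_\rho(x)) / |B_\rho(x)|$, so that $|Du|(B_\rho(x)) \leq |B_\rho(x)|\, M_\lambda Du(x)$ whenever $\rho < \lambda$.

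First I would prove the local oscillation estimate: for every Lebesgue point $x$ of $u$ and every $r \leq \lambda$,
$$ |u(x) - u_{B_r(x)}| \leq c_n\, r\, M_\lambda Du(x), $$
where $u_{B_r(x)} = \fint_{B_r(x)} u\, d\Le^n$. The starting point is the $BV$ Poincar\'e inequality $\fint_{B_\rho(z)} |u - u_{B_\rho(z)}| \, d\Le^n \leq c_n\, \rho\, |Du|(B_\rho(z))/|B_\rho(z)|$, which I would apply on the dyadic balls $B_k := B_{2^{-k} r}(x)$. Comparing consecutive averages via $|u_{B_{k+1}} - u_{B_k}| \leq 2^n \fint_{B_k} |u - u_{B_k}|\, d\Le^n$ and summing the resulting geometric series (using $u_{B_k} \to u(x)$ as $k \to \infty$, valid at Lebesgue points) gives the claim, since every radius $2^{-k} r$ is $\leq \lambda$.

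Next I would pass from a single point to the pair $x, y$ with $|x - y| = r \leq \lambda$. Writing $L := B_r(x) \cap B_r(y)$, whose Lebesgue measure is a fixed dimensional multiple of $r^n$, I would chain
$$ |u(x) - u(y)| \leq |u(x) - u_{B_r(x)}| + |u_{B_r(x)} - u_L| + |u_L - u_{B_r(y)}| + |u_{B_r(y)} - u(y)|. $$
The outer terms are controlled by the oscillation estimate of the previous step. For the middle terms, the bound $|u_{B_r(x)} - u_L| \leq (|B_r(x)|/|L|)\, \fint_{B_r(x)} |u - u_{B_r(x)}|\, d\Le^n \leq c_n\, r\, M_\lambda Du(x)$ follows again from Poincar\'e together with $|B_r(x)|/|L| = c_n$; the symmetric term is handled identically at $y$. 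Using the lens $L$ rather than a ball of radius $2r$ is what keeps every radius $\leq \lambda$, so that the hypothesis $|x-y| \leq \lambda$ is exactly what is needed and no enlargement of the maximal-function parameter occurs. Collecting the four pieces yields $|u(x) - u(y)| \leq c_n |x-y| (M_\lambda Du(x) + M_\lambda Du(y))$, and the exceptional set $N$ is the complement of the common Lebesgue points of $u$, which is $\Le^n$-null since $u \in L^1_{loc}(\R^n)$.

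I expect the main obstacle to be the rigorous handling of the $BV$ gradient as a measure rather than an $L^1$ function: the Poincar\'e inequality in Step 1 and the convergence $u_{B_k} \to u(x)$ must be justified for $BV$ data. The cleanest way to dispatch this is to first prove everything for $u \in C^\infty \cap BV(\R^n)$ — where the fundamental theorem of calculus along segments gives the pointwise bound $|u(x) - u_{B_r(x)}| \leq c_n \int_{B_r(x)} |Du(w)|\, |w-x|^{1-n}\, dw$ directly, and a dyadic-annulus decomposition of this Riesz potential reproduces the oscillation estimate — and then to mollify, $u_\varepsilon = u * \rho_\varepsilon$, using $|Du_\varepsilon| \leq |Du| * \rho_\varepsilon$ to transfer the maximal-function bound to the limit along a subsequence converging $\Le^n$-a.e. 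Since this is precisely the argument in the monograph of Stein cited in the statement, I would keep this last step brief.
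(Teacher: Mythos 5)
Your argument is correct, but it is worth noting that the paper does not actually prove this lemma at all: it only points to Stein's monograph, where the standard route is the one you relegate to your final paragraph (smooth case via the Riesz potential $\int_{B_r(x)} |Du(w)|\,|w-x|^{1-n}\,dw$ estimated over dyadic annuli, then mollification using $|Du_\varepsilon| \leq |Du| * \rho_\varepsilon$). Your main argument is a genuinely different and more self-contained route: the Poincar\'e inequality on balls holds directly for $BV$ functions with $|Du|(B_\rho)$ on the right-hand side, and $u_{B_k} \to u(x)$ at Lebesgue points is just the definition of Lebesgue point for an $L^1_{loc}$ function, so the telescoping dyadic-ball estimate plus the lens $L = B_r(x) \cap B_r(y)$ (which contains $B_{r/2}\left(\frac{x+y}{2}\right)$, whence $|B_r(x)|/|L| \leq 2^n$) requires no approximation whatsoever. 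Your closing mollification step is therefore superfluous, and dropping it also spares you a real nuisance you did not flag: transferring $M_\lambda Du_\varepsilon$ to $M_{\lambda'} Du$ uniformly is delicate for radii $\rho < \varepsilon$, where the naive comparison $\fint_{B_\rho(x)}(|Du|*\rho_\varepsilon) \leq \frac{|B_{\rho+\varepsilon}|}{|B_\rho|}\,\fint_{B_{\rho+\varepsilon}(x)} d|Du|$ degenerates. What your direct route buys is exactly the statement as written: every ball you use has radius at most $|x-y| \leq \lambda$, so the truncation parameter of the maximal function is $\lambda$ itself rather than $2\lambda$, matching the hypothesis $|x-y| \leq \lambda$ with no loss. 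One small point to patch: the paper defines $M_\lambda$ with a strict supremum over $0 < \rho < \lambda$, so at the endpoint $|x-y| = \lambda$ the bound $|Du|(B_\rho(x)) \leq |B_\rho(x)|\, M_\lambda Du(x)$ is not literally available for $\rho = \lambda$; it follows by continuity from below of the measure $|Du|$, since $B_\lambda(x) = \bigcup_{\rho < \lambda} B_\rho(x)$.
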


\begin{lemma}\label{maximal function estimate}
    Let $\lambda>0$ and $p>1$. Then, given $\mu = f \Le ^n$ with $f \in L^1 _{loc} (\R^n ; \R^n)$ and $\rho >0$, it holds that

    $$ ||M_\lambda f ||_{L^p (B_\rho (0))} \leq c_{p,d} ||f||_{L^p (B_{\rho + \lambda} (0))} .$$
\end{lemma}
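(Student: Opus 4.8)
The plan is to reduce this local estimate to the classical global $L^p$ boundedness of the (say, centered) Hardy--Littlewood maximal operator $M g(x) \coloneqq \sup_{r>0} \fint_{B_r(x)} |g(y)|\,dy$, which satisfies $\|Mg\|_{L^p(\R^n)} \le c_{p,n}\|g\|_{L^p(\R^n)}$ for every $p \in (1,\infty]$. This is precisely where the hypothesis $p>1$ is used, since the strong-type bound fails at $p=1$ (only the weak $(1,1)$ estimate survives). Because $M_\lambda f$ is defined through the total variation $|\mu| = |f|\,\Le^n$, it is enough to work with the scalar function $|f|$, and since $\||f|\|_{L^p} = \|f\|_{L^p}$ the vector-valued nature of $f$ plays no role.

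The key observation is a localization argument. If $x \in B_\rho(0)$ and $0 < r < \lambda$, then by the triangle inequality $B_r(x) \subset B_{\rho+\lambda}(0)$, so the averages defining $M_\lambda f(x)$ only ever see the values of $f$ on $B_{\rho+\lambda}(0)$. Setting $g \coloneqq f\,\mathbbm{1}_{B_{\rho+\lambda}(0)}$, this yields
$$\fint_{B_r(x)} |f(y)|\,dy = \fint_{B_r(x)} |g(y)|\,dy \le M g(x) \qquad \text{for all } x \in B_\rho(0),\ 0 < r < \lambda,$$
and taking the supremum over $r \in (0,\lambda)$ gives the pointwise domination $M_\lambda f(x) \le M g(x)$ for every $x \in B_\rho(0)$.

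It then only remains to integrate this inequality over $B_\rho(0)$ and invoke the global maximal theorem applied to $g$:
$$\|M_\lambda f\|_{L^p(B_\rho(0))} \le \|Mg\|_{L^p(B_\rho(0))} \le \|Mg\|_{L^p(\R^n)} \le c_{p,n}\|g\|_{L^p(\R^n)} = c_{p,n}\|f\|_{L^p(B_{\rho+\lambda}(0))},$$
which is exactly the asserted bound. There is no substantial obstacle in this argument beyond citing the classical theorem; the only point that genuinely needs care is the truncation to $g$. One cannot simply dominate $\|M_\lambda f\|$ by $\|Mf\|_{L^p(\R^n)}$, because $f$ is assumed merely locally integrable and its global maximal function need not lie in $L^p$. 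Replacing $f$ by $g = f\,\mathbbm{1}_{B_{\rho+\lambda}(0)}$ is what keeps the right-hand side finite and produces the correct enlarged ball $B_{\rho+\lambda}(0)$, and the validity of this replacement rests on verifying that the inclusion $B_r(x) \subset B_{\rho+\lambda}(0)$ holds uniformly over all $x \in B_\rho(0)$ and $0 < r < \lambda$, which is immediate.
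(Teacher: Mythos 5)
Your proof is correct. The paper itself gives no argument for this lemma, deferring instead to Stein's book \cite{stein1970singular}, and your proof---pointwise domination of $M_\lambda f$ on $B_\rho(0)$ by the global maximal function of the truncation $g = f$ restricted to $B_{\rho+\lambda}(0)$, followed by the classical strong $(p,p)$ maximal theorem---is precisely the standard argument one finds there, including the correct identification of where $p>1$ and the truncation are needed.
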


    \bibliographystyle{plain}
    \bibliography{bibliography.bib}

\begin{thebibliography}{1}

\bibitem{ambrosio2004transport}
Luigi Ambrosio.
\newblock Transport equation and cauchy problem for bv vector fields.
\newblock {\em Inventiones mathematicae}, 158(2):227, 2004.

\bibitem{ambrosio2014continuity}
Luigi Ambrosio and Gianluca Crippa.
\newblock Continuity equations and ode flows with non-smooth velocity.
\newblock {\em Proceedings of the Royal Society of Edinburgh Section A: Mathematics}, 144(6):1191--1244, 2014.

\bibitem{crippa2008estimates}
Gianluca Crippa and Camillo De~Lellis.
\newblock Estimates and regularity results for the diperna-lions flow.
\newblock 2008.

\bibitem{diperna1989ordinary}
Ronald~J DiPerna and Pierre-Louis Lions.
\newblock Ordinary differential equations, transport theory and sobolev spaces.
\newblock {\em Inventiones mathematicae}, 98(3):511--547, 1989.

\bibitem{stein1970singular}
Elias~M Stein.
\newblock {\em Singular integrals and differentiability properties of functions}.
\newblock Princeton university press, 1970.

\end{thebibliography}

\end{document}